\newtheorem{theorem}{Theorem}
\newtheorem{corollary}[theorem]{Corollary}
\newtheorem{lemma}[theorem]{Lemma}
\newtheorem{proposition}[theorem]{Proposition}
\theoremstyle{remark}
\newtheorem{remark}[theorem]{Remark}
\newcommand{\F}{{\mathbb{F}_{q}}}
\newcommand{\K}{\mathbb{K}}
\begin{document}

\title[Reducing the number of equations over a finite field]{Reducing the number of equations defining a subset of the $n$-space over a finite field}

\author{Stefan Bara\'{n}czuk} 
\address{Collegium Mathematicum, Adam Mickiewicz University, ul. Uniwersytetu Pozna\'{n}skiego 4,
	61-614, Pozna\'{n}, Poland}
\email{stefbar@amu.edu.pl}
\begin{abstract} Let $f_{1}, \ldots, f_{k}$ be polynomials defining an algebraic set in affine  $n$-space over a finite field. Suppose $k>n$. We  prove that there exists a system of polynomials $g_{1}, \ldots, g_{n}$, each being a linear combination with scalar coefficients of $f_{1}, \ldots, f_{k}$, defining the same algebraic set. In particular, one reduces the number of equations without increasing the total degree.  We also have the corresponding result for systems of homogeneous polynomials defining algebraic sets in projective spaces. 
\end{abstract}
\keywords{finite fields; algebraic sets; defining polynomials; reduction} 
\subjclass[2020]{11G25, 14A25}

\maketitle

The theorem that any algebraic set in $n$-dimensional space is  the intersection of $n$ hypersurfaces  \footnote{The problem dates back to Kronecker. Its rather dramatic story is briefly presented in \cite{EE}; for much more detailed vivid account consult N. Schappacher's available online presentation \textit{Political Space Curves}.}  has been proved independently by  Storch (\cite{S}), and Eisenbud and Evans (\cite{EE}); both short proofs are ring-theoretic, i.e., one reduces the number of generators of radical ideals.

In this note we examine closer the finite fields case of the problem. If just the number of equations needed to describe an algebraic set is in question, then the answer is immediate: it is easy to construct a single polynomial defining it. If, however, the nature of defining polynomials (e.g., their total degree) is to be preserved, this problem becomes more interesting.

 It turns out that we can avoid dealing with rings; the vector space structure is sufficient and, as in the
 theorem cited above, our result again produces $n$ equations; moreover, we show
 that these new equations can be chosen to be linear combinations with \textit{scalar} coefficients of the old ones, so, roughly speaking, they remain of the same type (see Corollaries \ref{wniosek} and \ref{wniosek1}, with accompanying examples), and our proof is surprisingly elementary.   \\

We fix the following notation:\\
\begin{tabular}{ll}
	$\F$ &   the finite field with $q$ elements; \\
	$\mathrm{Map}(X,\F)$ & the vector space of all functions $f : X \to \F$ for a given set $X$; \\
$\mathrm{Z}(f_{1}, \ldots, f_{k})$ & the set of common zeros of $f_{1}, \ldots, f_{k} \in \mathrm{Map}(X,\F)$; \\
$\mathrm{Span}(f_{1}, \ldots, f_{k})$ & the subspace of $\mathrm{Map}(X,\F)$ generated by $f_{1}, \ldots, f_{k}$;\\
$\mathbb{A}^{n}(\K)$ & the affine $n$-space over a field $\K$;\\
$\mathbb{P}^{n}(\K)$ & the projective $n$-space over a field $\K$;\\
$\left[\alpha_{1} \colon\ldots \colon \alpha_{n+1}  \right]$ & a set of homogeneous coordinates for a point in $\mathbb{P}^{n}(\K)$.		
\end{tabular}

\begin{theorem}\label{THM1} Let $X$ be a set with  at most $\frac{q^{n+1}-q}{q-1}$ elements. If $f_{1}, \ldots, f_{k} \in \mathrm{Map}(X,\F)$ for some $k>n$ then there exist $g_{1}, \ldots, g_{n} \in \mathrm{Span}(f_{1}, \ldots, f_{k})$ such that $\mathrm{Z}(g_{1}, \ldots, g_{n})=\mathrm{Z}(f_{1}, \ldots, f_{k})$.
 \end{theorem}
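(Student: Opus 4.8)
The plan is to discard the ring structure entirely and work inside the vector space $L := \mathrm{Span}(f_{1}, \ldots, f_{k}) \subseteq \mathrm{Map}(X,\F)$. For each $x \in X$, evaluation is a linear functional $\mathrm{ev}_{x} : L \to \F$, $\mathrm{ev}_{x}(g) = g(x)$, and $x \in \mathrm{Z}(f_{1},\ldots,f_{k})$ precisely when $\mathrm{ev}_{x}$ is the zero functional. Put $X_{0} := \{x \in X : \mathrm{ev}_{x} \neq 0\}$, so that $\mathrm{Z}(f_{1},\ldots,f_{k}) = X \setminus X_{0}$. Since any $g_{1},\ldots,g_{n} \in L$ automatically satisfy $\mathrm{Z}(f_{1},\ldots,f_{k}) \subseteq \mathrm{Z}(g_{1},\ldots,g_{n})$, the theorem reduces to producing $g_{1}, \ldots, g_{n} \in L$ that \emph{detect} every point of $X_{0}$, meaning that for each $x \in X_{0}$ some $g_{j}(x) \neq 0$. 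Writing $G := \mathrm{Span}(g_{1},\ldots,g_{n})$, this detection condition reads $G \not\subseteq \ker \mathrm{ev}_{x}$ for all $x \in X_{0}$, and the only quantitative input is $|X_{0}| \le |X| \le \frac{q^{n+1}-q}{q-1} = |\mathbb{P}^{n}(\F)| - 1$.

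First I would dispose of the case $\dim L \le n$: here $G = L$ works, since a basis of $L$, padded with zeros to length $n$, is detected wherever $\mathrm{ev}_{x} \neq 0$. The substance is to reach this case. I claim that whenever $\dim L > n$ one can pass to a hyperplane $L' \subsetneq L$ on which every $\mathrm{ev}_{x}$, $x \in X_{0}$, is still nonzero. Indeed, a hyperplane $L'$ kills $\mathrm{ev}_{x}$ exactly when $L' = \ker \mathrm{ev}_{x}$, so there are at most $|X_{0}| \le |\mathbb{P}^{n}(\F)| - 1$ forbidden hyperplanes; on the other hand the total number of hyperplanes of $L$ is $|\mathbb{P}(L^{*})| = |\mathbb{P}^{\dim L - 1}(\F)| \ge |\mathbb{P}^{n}(\F)|$, using $\dim L - 1 \ge n$. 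As the total strictly exceeds the forbidden count, an admissible $L'$ exists. Restricting everything to $L'$ leaves $X_{0}$ unchanged (no functional can vanish on $L$ and survive on $L'$, nor conversely) while lowering $\dim L$ by one. Iterating drives $\dim L$ down to $n$, where the first case concludes; the resulting $g_{j}$ lie in the original $L$ because the $L'$ are nested subspaces.

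The decisive point — and the only place the exact hypothesis is consumed — is the counting inequality $|\mathbb{P}^{\dim L - 1}(\F)| > |X_{0}|$ powering each reduction, valid precisely because $|X_{0}|$ is capped at $|\mathbb{P}^{n}(\F)| - 1$, one short of a full $\mathbb{P}^{n}(\F)$. I expect this to be the main obstacle, for two reasons. First, the naive alternatives — averaging $|S \cap H|$ over hyperplanes, or a union bound over $x \in X_{0}$ — yield only the weaker threshold $|X_{0}| < q^{n}$ and lose the sharp constant; it is the step-by-step use of the projective structure that recovers it. Second, the constant is tight: if $\dim L = n+1$ and the points $[\mathrm{ev}_{x}]$ range over \emph{all} of $\mathbb{P}(L^{*}) \cong \mathbb{P}^{n}(\F)$ (which takes exactly $|\mathbb{P}^{n}(\F)|$ of them), then for any $n$-dimensional $G = \ker \psi$ the class $[\psi]$ coincides with some $[\mathrm{ev}_{x}]$, leaving that $x$ undetected; so the reduction genuinely fails once the cardinality bound is exceeded.
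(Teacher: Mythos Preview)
Your proof is correct and runs on the same pigeonhole idea as the paper's: at most $|X|\le |\mathbb{P}^{n}(\F)|-1$ choices are forbidden, while $|\mathbb{P}^{n}(\F)|$ are available, so one survives. The paper carries this out in coordinates: it reduces to $k=n+1$ and parametrizes the candidate tuples $(g_{1},\ldots,g_{n})$ by rank-$n$ matrices $M\in M_{n,n+1}(\F)$ in reduced row echelon form, using a separate lemma to establish the bijection between such matrices and $\mathbb{P}^{n}(\F)$; one then picks an $M$ whose kernel line misses $S=\{[f_{1}(x),\ldots,f_{n+1}(x)]:x\in X_{0}\}$. You instead work coordinate-free inside $L$, counting hyperplanes directly via $|\mathbb{P}(L^{*})|$, which sidesteps the RREF bookkeeping and disposes of the degenerate case $\dim L\le n$ (linearly dependent $f_{i}$) in one line; the paper's version in return hands you an explicit matrix $M$ and hence explicit $g_{i}$. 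The two arguments are dual reformulations of each other rather than genuinely different routes.
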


This theorem is  best possible with respect to  the cardinality of $X$. Indeed, we have the following. 
\begin{proposition}\label{PROP1} For every  field  $\F$ and every positive integer $n$ there are a set $X_{n}$ of cardinality $\frac{q^{n+1}-q}{q-1}+1$, and maps 
	$f_{1}, \ldots, f_{n+1} \in \mathrm{Map}(X_{n},\F)$ such that	
	$\mathrm{Z}(f_{1}, \ldots, f_{n+1})=\emptyset$ but $\mathrm{Z}(g_{1}, \ldots, g_{n})\ne \emptyset$ for any $g_{1}, \ldots, g_{n} \in \mathrm{Span}(f_{1}, \ldots, f_{n+1})$.
\end{proposition}  

We have two immediate corollaries of Theorem \ref{THM1} of interest in  algebraic geometry. 

\begin{corollary}\label{wniosek}
	Let $n > 0$ and let
	$\phi \colon \mathcal{F} \to \mathrm{Map}(\mathbb{A}^{n}(\F),\F)$ be a homomorphism of vector spaces over $\F$. Any subset of $\mathbb{A}^{n}(\F)$ defined by some members of $\mathcal{F}$ (i.e., the zero locus of their images via $\phi$) can be defined using at most $n$ members of $\mathcal{F}$.
\end{corollary} 
The space $\mathcal{F}$ can be, for example, a space of polynomials in $n$ variables of bounded total degree.

\begin{corollary}\label{wniosek1}
	Let $n \ge 0$ and let 
	$\phi \colon \mathcal{F} \to \mathrm{Map}(\mathbb{P}^{n}(\F),\F)$ be a homomorphism of vector spaces over $\F$. Any nonempty subset of $\mathbb{P}^{n}(\F)$ defined by some members of $\mathcal{F}$ (i.e., the zero locus of their images via $\phi$) can be defined using at most $n$ members of $\mathcal{F}$.
\end{corollary}	
The space $\mathcal{F}$ can be a space of homogeneous polynomials in $n+1$ variables of bounded total degree, the space of quadratic (or higher degree) forms in $n+1$ variables, the space of diagonal forms in $n+1$ variables, etc.\\

Before we present the proofs of Theorem \ref{THM1} and Proposition \ref{PROP1}, we separately state their following ingredient.

 Let $\K$ be an arbitrary field, and $n$ be a positive integer. Denote by $\mathcal{M}_{n}$ the set of all matrices in $M_{n, n+1}(\K)$ in  reduced row echelon form having the rank equal to $n$, by $N(M)$ the null space of a matrix $M$, by $\theta$ the zero vector in $\K^{n+1}$, and by $\sim$ the equivalence relation which identifies points
 lying on the same line through the origin.
 
\begin{lemma}\label{lemat}
The map		
\[\begin{array}{c}
	\mathcal{M}_{n} \to \mathbb{P}^{n}(\K)\\
	M \mapsto (N(M)\setminus\left\lbrace \theta\right\rbrace)_{\sim}
\end{array}\]
is bijective.	
\end{lemma}

\begin{proof} 
 Denote by $\mathcal{N}_{n}$ the set of all matrices in $M_{n, n+1}(\K)$ having the rank equal to $n$. For every $M \in\mathcal{N}_{n}$ the dimension of the vector space $N(M) < \K^{n+1}$ equals $1$ by the rank–nullity theorem,  so 
 $(N(M)\setminus\left\lbrace \theta\right\rbrace)_{\sim} \in \mathbb{P}^{n}(\K)$.
 We thus have the map 	
 \[\begin{array}{c}
 \mathcal{N}_{n} \to \mathbb{P}^{n}(\K)\\
 M \mapsto (N(M)\setminus\left\lbrace \theta\right\rbrace)_{\sim}
 \end{array}\]
 Since matrices of the same size have equal null spaces if and only if they are row equivalent, the induced map 
 \[
   \mathcal{N}_{n} / GL_{n}(\K)  \to \mathbb{P}^{n}(\K)
 \]
 is well-defined and injective. It is also surjective, since every vector subspace of $\K^{n+1}$ having dimension equal to $1$ is the null space of a matrix in $\mathcal{N}_{n}$. 

Since the canonical map 
\[
\mathcal{M}_{n}  \to \mathcal{N}_{n}/GL_{n}(\K)
\]
is bijective, the lemma follows.
\end{proof}

\begin{proof}[Proof of Theorem \ref{THM1}] It is enough to prove the statement for $k=n+1$ since we may apply induction.

Denote 
\[S=\left\lbrace 
[f_{1}(x) \colon  \ldots \colon  f_{n+1}(x)] \colon x \in X \setminus \mathrm{Z}(f_{1}, \ldots, f_{n+1}) 
\right\rbrace. \]
By  Lemma \ref{lemat} every element $s$ of $S$ defines a unique matrix in $\mathcal{M}_{n}$; denote this matrix by $M_{s}$.  Examine the set 
\[T=\mathcal{M}_{n}\setminus \left\lbrace M_{s} \colon s \in S \right\rbrace. \] 
By Lemma \ref{lemat} the number of elements in $\mathcal{M}_{n}$ equals the cardinality of $\mathbb{P}^{n}(\F)$, i.e., $\frac{q^{n+1}-1}{q-1}$. The number of elements in $S$ is at most the cardinality of $X$, i.e., $\frac{q^{n+1}-q}{q-1}$.
Hence the cardinality of $T$ is  at least
$\frac{q^{n+1}-1}{q-1}-\frac{q^{n+1}-q}{q-1}=1$. So choose a matrix $M\in T$. Our $g_{1}, \ldots, g_{n}$ are defined by
\[\left[ \begin{array}{c}
g_{1}\\
\vdots\\
g_{n}
\end{array}\right]=M \left[ \begin{array}{c}
f_{1}\\
\vdots\\
f_{n+1}
\end{array}\right].\]  
Indeed, the inclusion $\mathrm{Z}(f_{1}, \ldots, f_{n+1}) \subset \mathrm{Z}(g_{1}, \ldots, g_{n})$ is obvious, and 
by the definition of $T$ the set $\mathrm{Z}(g_{1}, \ldots, g_{n})$ is disjoint from $X \setminus \mathrm{Z}(f_{1}, \ldots, f_{n+1})$, i.e., $\mathrm{Z}(g_{1}, \ldots, g_{n}) \subset \mathrm{Z}(f_{1}, \ldots, f_{n+1})$. 
\end{proof}

In order to prove Proposition \ref{PROP1} we need the following.

\begin{lemma}\label{matrix thy} Let $\K$ be an arbitrary field. 
For any matrix $A \in M_{n, m}(\K)$ where $n\le m$ there exist a matrix $M \in M_{n, m}(\K)$  in  reduced row echelon form having the rank equal to $n$, and a matrix $B \in M_{n, n}(\K)$ such that
$A=BM$.
\end{lemma}

\begin{proof} Denote by $I_{r, k, l}$ the matrix in $M_{k, l}(\K)$ having $x_{11}=\ldots=x_{rr}=1$ and all remaining entries equal to 0. Denote the rank of $A$ by $r$.
	Let $G_{1} \in GL_{n}(\K)$ and $G_{2} \in GL_{m}(\K)$ be matrices transforming $A$ into $I_{r, n, m}$, i.e., $G_{1} A G_{2} = I_{r, n, m}$. Since $I_{r, n, m}=I_{r, n, n}I_{n, n, m}$, we get $A=G_{1}^{-1}I_{r, n, n}I_{n, n, m}G_{2}^{-1}$. Let $G_{3} \in GL_{n}(\K)$ be the matrix transforming $I_{n, n, m}G_{2}^{-1}$ into  reduced row echelon form. We have
	\[A=G_{1}^{-1}I_{r, n, n}G_{3}^{-1} G_{3}^{}I_{n, n, m}G_{2}^{-1}.\]  
	Put $B=G_{1}^{-1}I_{r, n, n}G_{3}^{-1}$ and $M= G_{3}^{}I_{n, n, m}G_{2}^{-1}$.
	\end{proof}	

\begin{proof}[Proof of Proposition \ref{PROP1}]
For every point $P \in \mathbb{P}^{n}(\F)$ choose a set of homogeneous
coordinates for $P$ and denote it by $c_{P}$. Define $X_{n}=\left\lbrace c_{P} \colon P \in \mathbb{P}^{n}(\F) \right\rbrace$. The cardinality of $X_{n}$ is $\frac{q^{n+1}-1}{q-1}=\frac{q^{n+1}-q}{q-1}+1$. Consider $f_{1}, \ldots, f_{n+1} \in \mathrm{Map}(X_{n},\F)$ defined in the following way: for every $x\in X_{n}$ put
\[f_{i}(x) = \text{the } i\text{th coordinate of } x.\]
We have $\mathrm{Z}(f_{1}, \ldots, f_{n+1})=\emptyset$. 

Let $g_{1}, \ldots, g_{n} \in \mathrm{Span}(f_{1}, \ldots, f_{n+1})$, i.e.,  
\[\left[ \begin{array}{c}
g_{1}\\
\vdots\\
g_{n}
\end{array}\right]=A \left[ \begin{array}{c}
f_{1}\\
\vdots\\
f_{n+1}
\end{array}\right]\]
for some matrix $A \in M_{n, n+1}(\F)$. 
By Lemma \ref{matrix thy} there exist a matrix $M \in M_{n, n+1}(\F)$  in  reduced row echelon form having the rank equal to $n$, and a matrix $B \in M_{n, n}(\F)$ such that
$A=BM$.
Hence by  Lemma \ref{lemat} we get that  there is $x \in X_{n}$ belonging to $\mathrm{Z}(g_{1}, \ldots, g_{n})$.    
\end{proof}

\begin{proof}[Proof of Corollary \ref{wniosek}] 	
	For any positive integer $n$ we have $\frac{q^{n+1}-q}{q-1}\ge q^{n}=\left| \mathbb{A}^{n}(\F)\right|$. Applying Theorem \ref{THM1} and some elementary algebra, we get the assertion.
\end{proof}

\begin{remark}
	It has been suggested by the reviewer of this paper to include the following example to demonstrate that although the bound $\frac{q^{n+1}-q}{q-1}\ge q^{n}$ used  in the proof of Corollary \ref{wniosek} is rather crude, the result is sharp for any $q$. Consider the system of $n$ polynomials $f_{i}(x_{1}, \ldots, x_{n})=x_{i}$. While $\mathrm{Z}(f_{1}, \ldots, f_{k})=\left\lbrace \theta\right\rbrace$, any system of $n-1$ combinations of them has at least $q$ common zeros.   
\end{remark}

\begin{proof}[Proof of Corollary \ref{wniosek1}] Let $\left\lbrace f_{1}, \ldots, f_{k}\right\rbrace $ be the image via $\phi$ of a subset of  $\mathcal{F}$. Let $\alpha \in \mathrm{Z}(f_{1}, \ldots, f_{k})$. Denote by $\bar{f_{1}}, \ldots, \bar{f_{k}}$ the images of $f_{1}, \ldots, f_{k}$ via the restriction homomorphism 
	\[\begin{array}{c}
		r \colon \mathrm{Map}(\mathbb{P}^{n}(\F),\F) \to \mathrm{Map}(\mathbb{P}^{n}(\F)\setminus\left\lbrace \alpha\right\rbrace,\F)\\
		r(f)=f |_{\mathbb{P}^{n}(\F)\setminus\left\lbrace \alpha\right\rbrace}.
	\end{array}\]
	For any positive integer $n$ we have \[\left| \mathbb{P}^{n}(\F)\setminus\left\lbrace \alpha\right\rbrace\right|=\left| \mathbb{P}^{n}(\F)\right| -1 = \frac{q^{n+1}-1}{q-1}-1= \frac{q^{n+1}-q}{q-1}.\] 
	So we apply Theorem \ref{THM1} to get 		
	$\bar{g_{1}}, \ldots, \bar{g_{n}}\in \mathrm{Span}(\bar{f_{1}}, \ldots, \bar{f_{k}})$
	such that $\mathrm{Z}(\bar{g_{1}}, \ldots, \bar{g_{n}}) =\mathrm{Z}(\bar{f_{1}}, \ldots, \bar{f_{k}})$. Let $A \in M_{k, n}(\F)$ be such that
	\[\left[ \begin{array}{c}
		\bar{g_{1}}\\
		\vdots\\
		\bar{g_{n}}
	\end{array}\right]=A \left[ \begin{array}{c}
		\bar{f_{1}}\\
		\vdots\\
		\bar{f_{k}}
	\end{array}\right].\] 
	Define $g_{1}, \ldots, g_{n} \in \mathrm{Map}(\mathbb{P}^{n}(\F),\F)$ by
	\[\left[ \begin{array}{c}
		{g_{1}}\\
		\vdots\\
		{g_{n}}
	\end{array}\right]=A \left[ \begin{array}{c}
		{f_{1}}\\
		\vdots\\
		{f_{k}}
	\end{array}\right].\] 
	We are done, since 
	\[\begin{array}{l}
		\mathrm{Z}(f_{1}, \ldots, f_{k})= \left\lbrace \alpha\right\rbrace \cup \mathrm{Z}(\bar{f_{1}}, \ldots, \bar{f_{k}}),  \, \mathrm{and}\\
		\mathrm{Z}(g_{1}, \ldots, g_{n})= \left\lbrace \alpha\right\rbrace \cup \mathrm{Z}(\bar{g_{1}}, \ldots, \bar{g_{n}}).
	\end{array}\]
\end{proof}
 
\section*{Acknowledgements.} We are grateful to Grzegorz Banaszak and Bartosz Naskr\k{e}cki for discussions and suggestions. We wish to thank an anonymous referee for many improvements; in particular, for suggesting the concise formulation and proof of Lemma \ref{lemat}.

\bibliographystyle{plain}

\end{document}